\documentclass[10pt,a4paper]{article}

\usepackage[a4paper,hmargin=38mm,vmargin=40mm,footskip=15mm]{geometry}
\usepackage[OT1]{fontenc}
\usepackage[utf8]{inputenc}
\usepackage[english]{babel}
\usepackage{csquotes}
\usepackage{lmodern}
\usepackage{amsmath}
\usepackage{amsfonts}
\usepackage{amssymb}
\usepackage{mathtools}
\usepackage{amsthm}
\usepackage{thmtools}
\usepackage{bbm}
\allowdisplaybreaks
\usepackage{float}
\usepackage[font={small,footnotesize}]{caption}
\usepackage{subcaption}
\usepackage{booktabs}
\usepackage{tabularx}
\usepackage{tikz}
\usepackage{pgfplots}
\usepackage{color}
\pgfplotsset{compat=1.14}
\usetikzlibrary{arrows.meta}
\usetikzlibrary{patterns}
\usepackage{enumitem}
\usepackage[backend=biber,style=numeric,sorting=nyt,sortcites=true,doi=false,isbn=false,url=false,eprint=true,maxbibnames=9,giveninits=true]{biblatex}
\DeclareNameAlias{sortname}{last-first}
\renewbibmacro{in:}{}
\AtEveryBibitem{%
  \ifentrytype{misc}{%
  }{%
    \clearfield{archivePrefix}%
    \clearfield{arxivId}%
    \clearfield{eprint}%
    \clearfield{primaryClass}%
  }%
}
\addbibresource{library.bib}
\usepackage{hyperref}
\hypersetup{
colorlinks=false,
bookmarksnumbered=false,
pdfpagemode=UseNone,
pdfstartpage=1,
pdfstartview={XYZ null null 1.00},
pdfpagelayout=OneColumn,
pdflang=English}
\usepackage{authblk}
\usepackage{todonotes}

\theoremstyle{plain}
\newtheorem{theorem}{Theorem}

\theoremstyle{definition}

{\pushQED{\qed}\examplex}
{\popQED\endexamplex\vspace{2ex}}

\setlist{align=right,labelindent=0.2em,labelwidth=1.5em,labelsep*=0.6em,leftmargin=!,topsep=0ex,partopsep=0ex,parsep=0ex,itemsep=0ex}
\setlist[itemize,1]{label=\raisebox{0.0em}{\scalebox{1.0}{$\bullet$}}}
\setlist[itemize,2]{label=\raisebox{0.1em}{\scalebox{0.5}{$\blacksquare$}}}
\setlist[itemize,3]{label=\raisebox{0.1em}{\scalebox{0.7}{$\blacktriangleright$}}}
\setlist[itemize,4]{label=\raisebox{0.1em}{\scalebox{0.6}{$\blacklozenge$}}}
\setlist[enumerate]{label=(\alph*)}

\newcommand{\citelink}[2]{\nocite{#1}\hyperlink{cite.\therefsection @#1}{#2}}
\renewcommand{\P}{\mathord{\mathbb{P}}}
\newcommand{\E}{\mathord{\mathbb{E}}}

\newcommand{\N}{\mathord{\mathbb{N}}}
\newcommand{\e}{\mathalpha{\mathrm{e}}}

\newcommand{\smallO}{\mathord{o}}

\newcommand{\rg}{\mathord{\mathbb{G}}}
\newcommand{\W}{W}

\newcommand{\pmax}{p_{\textup{max}}}

\newcommand{\cn}{\omega}

\newcommand{\qcn}[1][\gamma]{\omega^{\mspace{-1mu}\scriptscriptstyle(\mspace{-2.5mu}#1\mspace{-2mu})\mspace{-1mu}}}
\newcommand{\tqcn}[1][n]{\qcn_{#1}}
\newcommand{\kernel}{\kappa}
\renewcommand{\epsilon}{\varepsilon}
\renewcommand{\phi}{\varphi}

\title{Quasi-cliques in inhomogeneous random graphs}
\author{Kay Bogerd\\\texttt{\small\href{mailto:k.m.bogerd@tue.nl}{k.m.bogerd@tue.nl}}}
\affil{Eindhoven University of Technology}
\date{\today}

\begin{document}
\noindent\makebox[\textwidth][c]{\begin{minipage}[c]{1.1\textwidth}\maketitle\end{minipage}}

\begin{abstract}
Given a graph $G$ and a constant $\gamma \in [0,1]$, let $\qcn(G)$ be the largest integer $r$ such that there exists an $r$-vertex subgraph of $G$ containing at least $\gamma \smash{\binom{r}{2}}$ edges. It was recently shown that $\qcn(G)$ is highly concentrated when $G$ is an Erd\H{o}s-R\'{e}nyi random graph \citelink{Balister2018}{(Balister, Bollobás, Sahasrabudhe, Veremyev, 2019)}.
This paper provides a simple method to extend that result to a setting of inhomogeneous random graphs, showing that $\qcn(G)$ remains concentrated on a small range of values even if $G$ is an inhomogeneous random graph. Furthermore, we give an explicit expression for $\qcn(G)$ and show that it depends primarily on the largest edge probability of the graph $G$.
\end{abstract}

\section{Introduction}
\label{sec:introduction}
Let $G = (V, E)$ be a simple graph, with vertex set $V$ and edge set $E$. Given a subset of vertices $S \subseteq V$, let $G[S]$ denote the \emph{subgraph} of $G$ induced by $S$. That is, $G[S]$ is a graph with vertex set $S$ and edge set $\{(i, j) : i, j \in S\} \cap E$. A \emph{clique} is a subset of vertices $C \subseteq V$ such that $G[C]$ is a complete graph, meaning that all vertices in $G[C]$ are connected by an edge. Cliques are an important concept in graph theory, and are often used as a model for community structure \cite{Luce1950,Alba1973,Mokken1979}. In particular, the problem of finding the largest clique or largest community in a given graph has received much interest \cite{Dekel2014,Deshpande2015}.

However, for many practical applications the definition of a clique can be too restrictive. Often a few missing edges within a community are fine, as long as the community remains sufficiently well connected.
To this end, several relaxations have been proposed for the definition of a clique \cite{Pattillo2013a}. One of the most successful of these is known as the \emph{$\gamma$-quasi-clique}, where $\gamma$ is a parameter \cite{Abello1999}. For $\gamma \in [0, 1]$, a $\gamma$-quasi-clique is a subset of vertices $S \subseteq V$ such that $G[S]$ contains at least $\smash{\gamma \binom{|S|}{2}}$ edges. That is, a $\gamma$-quasi-clique is a subset of vertices such that a fraction $\gamma$ of all possible edges between them is present.

Just as for cliques, one would like to know the size of the largest quasi-clique in a given graph \cite{Abello2002,Brunato2008,Veremyev2016}. However, it comes as no surprise that finding the largest quasi-clique is a computationally hard problem \cite{Pattillo2013,Pastukhov2018}, similar to the problem of finding the largest clique \cite{Karp1972,Feige1991,Hastad1999}. To circumvent this difficulty, a common approach has been to study the related problem of determining the size of the largest clique or quasi-clique in random graphs. For cliques this approach has been very fruitful, and it turns out that the size of the largest clique is highly concentrated in a variety of random graph models. The first results of this type were obtained for Erd\H{o}s-R\'{e}nyi random graphs \cite{Matula1972,Matula1976,Bollobas1976,McDiarmid1984}, and later similar results were obtained for random geometric graphs \cite{Muller2008}, and inhomogeneous random graphs \cite{Dolezal2017,Bogerd2018}.

Recently, the size of the largest quasi-clique was also studied in an Erd\H{o}s-R\'{e}nyi random graph, where it was shown that the largest quasi-clique is again highly concentrated \cite{Balister2018}. The aim of this paper is to extend that result to the setting of inhomogeneous random graphs.
In particular, we formalize a heuristic presented in \cite{Bogerd2018}, and show how this (together with the result from \cite{Balister2018}) can be applied to show that the largest quasi-clique remains concentrated on a narrow range of values even in an inhomogeneous random graph.

\section{Model and results}
\label{sec:mode4l_and_results}
We are interested in understanding the behavior of the largest quasi-clique in an inhomogeneous random graph. To this end, define the $\gamma$-quasi-clique number $\qcn(G)$ of a graph $G$ as the size of the largest subset of vertices $S \subseteq V$ such that the induced subgraph $G[S]$ contains at least $\smash{\gamma \binom{|S|}{2}}$ edges, where $\gamma \in [0, 1]$ is a parameter. Note that $\qcn[1](G)$ is the familiar clique number of $G$, usually denoted simply by $\cn(G)$.

In this paper, we study the behavior of $\qcn(G)$ when $G$ is distributed according to the random graph model $\rg(n, \kernel)$. This model has two parameters: the number of vertices $n$, and a symmetric measurable function called a \emph{kernel} $\kernel : [0, 1]^2 \to (0, 1)$. Below we introduce the key concepts of this model, for a more detailed overview we refer the reader to Lov\'{a}sz's book \cite{Lovasz2012}. An element of $\rg(n, \kernel)$ is a simple graph $G = (V, E)$ that has $n \in \N$ vertices with vertex set $V = [n] \coloneqq \{1, \ldots, n\}$, and a random edge set $E$. Each vertex $i \in V$ is assigned a \emph{weight} $\W_i$, which is simply a uniform variable on $[0, 1]$, that is $\W_i \sim \text{Unif}(0,1)$. Conditionally on these weights, the presence of an edge between two vertices $i,j \in V$, with $i \neq j$, is modeled by independent Bernoulli random variables with success probability
\begin{equation}
\label{eq:edge_probabilities}
p_{ij} \coloneqq \P\bigl((i,j) \in E \,|\, (\W_k)_{k \in V}\bigr) = \kernel(\W_i, \W_j) \,.
\end{equation}

The kernel $\kernel(\cdot, \cdot)$ and the vertex weights $\W_i$ are both not allowed to depend on the graph size $n$, and therefore the edge probabilities $p_{ij}$ are independent of $n$. This means that the graphs we consider are necessarily dense and have a number of edges that is quadratic in the graph size.

This brings us to the main result of this paper, which is to show that the $\gamma$-quasi-clique number $\qcn(G)$ of a graph $G \sim \rg(n, \kernel)$ is concentrated on a small range of values. Furthermore, this result shows that the size of the largest quasi-clique depends primarily on the densest part of the graph, where the edge probabilities are close to their maximum value. This is made precise by the following result.
\begin{theorem}
\label{thm:quasi_clique_number_concentration}
Let $\kernel(\cdot, \cdot)$ be a kernel that is continuous and attains it maximum value at the point $(c, c)$ for some $c \in [0, 1]$, and let $\pmax \coloneqq \kernel(c, c)$. Given $\pmax < \gamma \leq 1$, define
\begin{equation}
\label{eq:typical_quasi_clique_number}
\tqcn
  \coloneqq \frac{2 \log(n)}{D(\gamma, p_\textup{max})} \,,
\end{equation}
where $D(\gamma, p)$ is the Kullback-Leibler divergence between the Bernoulli distributions $\text{Bern}(\gamma)$ and $\text{Bern}(p)$, given by
\begin{equation}
D(\gamma, p)
  \coloneqq \begin{cases}
  \gamma \log\mspace{-2mu}\left(\frac{\gamma}{p}\right) + (1 - \gamma) \log\mspace{-2mu}\left(\frac{1 - \gamma}{1 - p}\right) & \;\;\text{if}\;\; \gamma < 1 \,,\\
  \log\mspace{-2mu}\left(\frac{1}{p}\right) & \;\;\text{if}\;\; \gamma = 1 \,.
  \end{cases}
\end{equation}
Then, for every $\epsilon > 0$,
\begin{equation}
\label{eq:quasi_clique_number_concentration}
\P\Bigl(\qcn(G) \in \bigl[(1 - \epsilon)\tqcn, (1 + \epsilon)\tqcn\bigr]\Bigr) \to 1 \,,
\qquad\text{as } n \to \infty \,.
\end{equation}
\end{theorem}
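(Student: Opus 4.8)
The plan is to sandwich $\qcn(G)$ between the quasi-clique numbers of two auxiliary Erdős--Rényi graphs and then invoke the concentration result for $G(n,p)$ of \citelink{Balister2018}{Balister et al.}, which states that for fixed $\pmax < \gamma \le 1$ the quantity $\qcn(G(n,p))$ lies, with high probability, within a factor $(1\pm\epsilon)$ of $2\log(n)/D(\gamma,p)$. Both directions rely on the observation that the property ``$S$ is a $\gamma$-quasi-clique'' is preserved under adding edges, so that $\qcn$ is monotone nondecreasing in the edge set; this lets me transfer the Erdős--Rényi statement through monotone (edge) couplings obtained by conditioning on the vertex weights.

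\emph{Upper bound.} Since $\kernel$ attains its maximum $\pmax$ at $(c,c)$, every edge probability satisfies $p_{ij} = \kernel(\W_i,\W_j) \le \pmax$. Conditionally on the weights, $G$ is therefore stochastically dominated by $G(n,\pmax)$, so there is a coupling with $G \subseteq G(n,\pmax)$ and hence $\qcn(G) \le \qcn(G(n,\pmax))$. The Erdős--Rényi result applied with $p = \pmax$ gives $\qcn(G(n,\pmax)) \le (1+\epsilon)\,2\log(n)/D(\gamma,\pmax) = (1+\epsilon)\tqcn$ with high probability, which is the desired upper bound and needs no slack since $\pmax$ is an exact pointwise bound on the $p_{ij}$.

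\emph{Lower bound.} This is the step that carries the real content. Fix a small $\delta > 0$. By continuity of $\kernel$ at $(c,c)$ there is $\eta > 0$ with $\kernel(x,y) \ge \pmax - \delta$ whenever $x,y \in I \coloneqq [c-\eta, c+\eta] \cap [0,1]$. Let $S$ be the set of vertices whose weight lies in $I$; then $|S| \sim \text{Bin}(n, |I|)$ concentrates around $|I|\,n$, so $m \coloneqq |S| \to \infty$ with high probability and $\log m = \log(n)(1 + \smallO(1))$. Conditionally on the weights and on $\{|S| = m\}$, every edge inside $G[S]$ is present independently with probability at least $\pmax - \delta$, so $G[S]$ stochastically dominates $G(m, \pmax - \delta)$; as any quasi-clique of $G[S]$ is also one of $G$, this gives $\qcn(G) \ge \qcn(G(m, \pmax-\delta))$. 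Applying the Erdős--Rényi result with parameter $\pmax - \delta < \gamma$ (uniformly over the $\bigO(\sqrt{n})$-wide concentration window of $m$, all of whose values tend to infinity) yields, with high probability,
\begin{equation*}
\qcn(G) \ge (1-\tfrac{\epsilon}{2})\,\frac{2\log m}{D(\gamma, \pmax-\delta)} = (1-\tfrac{\epsilon}{2})\,(1+\smallO(1))\,\frac{2\log n}{D(\gamma, \pmax-\delta)} \,.
\end{equation*}
It remains to absorb the loss from $\delta$: because $D(\gamma, \cdot)$ is continuous and strictly decreasing on $(0,\gamma)$, one has $D(\gamma, \pmax - \delta) \downarrow D(\gamma,\pmax)$ as $\delta \downarrow 0$, so for $\delta$ chosen small enough (depending only on $\epsilon$) the ratio $D(\gamma,\pmax)/D(\gamma,\pmax-\delta)$ exceeds $1 - \epsilon/2$, and then the right-hand side is at least $(1-\epsilon)\tqcn$ for all large $n$. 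Combining with the upper bound gives the claimed concentration. The main subtlety throughout is the bookkeeping of the two parameters: $\delta$ (hence the window $\eta$) must be fixed \emph{before} letting $n \to \infty$, so that $m \to \infty$ and the high-probability Erdős--Rényi statement genuinely applies to $G(m,\pmax-\delta)$; it is exactly this order of limits, together with the continuity of $D$, that lets the window argument recover $\tqcn$ in the limit.
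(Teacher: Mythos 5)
Your proposal is correct and follows essentially the same route as the paper: a monotone coupling with $\rg(n, \pmax)$ for the upper bound, and for the lower bound a restriction to the vertices whose weights lie in a window around $c$, followed by the Erd\H{o}s--R\'{e}nyi concentration result and continuity of $D(\gamma, \cdot)$. The only differences are cosmetic: the paper shrinks the window with $n$ (width $\delta_n = 1/\log(n)$, so that $p_n \to \pmax$ along the sequence) where you fix its width in terms of $\epsilon$ before taking $n \to \infty$, and for $\gamma = 1$ the Erd\H{o}s--R\'{e}nyi input is Matula's clique-number theorem rather than Balister et al., whose result covers $\gamma < 1$.
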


To display the applicability of the above result, we show that it can be applied to many well-known random graph models. The simplest example is probably the Erd\H{o}s-R\'{e}nyi random graph, which is obtained by setting the kernel $\kernel(x, y)$ to a constant independent of $x$ and $y$.
Another commonly used example are the so-called rank-1 random graphs, where $\kernel(x, y) = \phi(x) \mspace{1mu} \phi(y)$ for some function $\phi$. Often the function $\phi(\cdot) = F_X^{-1}\mspace{-1mu}(\cdot)$ is the inverse cumulative distribution function of some distribution $X$, so that $\phi(\W_i)$ can be interpreted as a sample from that distribution. This results in a model similar to that considered in \cite{Bogerd2018}. The final model that satisfies the conditions in Theorem~\ref{thm:quasi_clique_number_concentration} is the stochastic block model \cite{Holland1983}, also called the planted partition model in computer science. This model is obtained when the kernel $\kernel(\cdot, \cdot)$ is only allowed to take on finitely many different values.

Note that Theorem~\ref{thm:quasi_clique_number_concentration} gives the first-order behavior of $\tqcn$ from \eqref{eq:typical_quasi_clique_number}.
More precise results are known for the clique and quasi-clique number in an Erd\H{o}s-R\'{e}nyi random graph \cite{Balister2018,Matula1976}, or for the clique number in rank-1 random graphs \cite{Bogerd2018}. Specifically, in those cases the quasi-clique number and clique number are concentrated on two consecutive integers. Therefore, it might be reasonable to expect that it is likewise possible to show such a two-point concentration result in the more general model we consider in this paper. However, this would require a significantly more detailed analyses. The main difficulty here is that the higher order terms of $\tqcn$ will likely depend in a complex way on the whole kernel $\kernel(\cdot, \cdot)$ and not just on the maximum value $\kernel(c, c)$. This was also observed for rank-1 random graphs in \cite{Bogerd2018}, where several examples are explicitly computed. Thus, the method we use in the proof of Theorem~\ref{thm:quasi_clique_number_concentration} will likely not be precise enough to characterize the higher order terms of $\tqcn$ and a different approach would be needed for this.

We end this paper with the proof of Theorem~\ref{thm:quasi_clique_number_concentration}. This proof is based on the ideas presented in \cite[Section~3.1]{Bogerd2018} combined with the results in \cite{Balister2018} and \cite{Matula1976}.

\mathtoolsset{showonlyrefs}
\begin{proof}[Proof of Theorem~\ref{thm:quasi_clique_number_concentration}]
Below we consider the upper and lower bound of \eqref{eq:quasi_clique_number_concentration} separately. Furthermore, we will use the following standard asymptotic notation: given deterministic sequences $a_n$ and $b_n$, we write $a_n = \smallO(b_n)$ when $a_n / b_n \to 0$, and we say that a sequence of events holds with high probability if it holds with probability tending to $1$. When limits are unspecified they are taken as the number of vertices $n$ tends to $\infty$.

\vspace{-6pt}
\paragraph{\normalfont\emph{Upper bound:}}
We first define a coupling between the random graph $\rg(n; \kernel)$ and the Erd\H{o}s-R\'{e}nyi random graph $\rg(n; \pmax)$, where we recall that $\pmax = \kernel(c, c)$ is the maximum edge probability. For $i \neq j \in [n]$, let $U_{ij} \sim \text{Unif}(0, 1)$ be independent uniform random variables on $[0, 1]$. Conditionally on these uniform random variables and the weights $\W_i$, with $i \in [n]$, define
\begin{alignat}{7}
\label{eq:upper_bound_coupling}
G &= (V, E) \,,
\quad&&\text{with}&\quad V &= [n]\,,
\quad&&\text{and}&\quad  E &= \{(i, j) : U_{ij} \leq \kernel(\W_i, \W_j)\} \,,\\
G' &= (V', E') \,,
\quad&&\text{with}&\quad V' &= [n]\,,
\quad&&\text{and}&\quad  E' &= \{(i, j) : U_{ij} \leq \kernel(c, c)\} \,.
\end{alignat}
It can easily be seen that $G$ is an inhomogeneous random graph, that is $G \sim \rg(n, \kernel)$. Similarly, $G' \sim \rg(n, \pmax)$ is distributed as an Erd\H{o}s-R\'{e}nyi random graph with edge probability $\pmax = \kernel(c, c)$.

Because the edge probabilities satisfy $p_{ij} = \kernel(\W_i, \W_j) \leq \pmax$ almost surely, for all $i \neq j \in [n]$, the coupling in \eqref{eq:upper_bound_coupling} shows that $\qcn(G) \leq \qcn(G')$ almost surely. Furthermore, by \cite[Theorem~1]{Balister2018} if $\gamma < 1$ or \cite[Theorem~6]{Matula1976} if $\gamma = 1$, it follows that
\begin{equation}
\qcn(G')
  \leq \frac{2}{D(\gamma, \pmax)} \Bigl(\log(n) - \log\log(n) + \log(\e \mspace{1mu} D(\gamma, \pmax) / 2)\Bigr) + 1 + \epsilon \,,
\end{equation}
with high probability.

Combining the above, we obtain
\begin{align}
\qcn(G)
  &\leq \qcn(G')\\
  &\leq \frac{2}{D(\gamma, \pmax)} \Bigl(\log(n) - \log\log(n) + \log(\e \mspace{1mu} D(\gamma, \pmax) / 2)\Bigr) + 1 + \epsilon\\
  &\leq (1 + \epsilon)\frac{2 \log(n)}{D(\gamma, \pmax)}
  = (1 + \epsilon)\tqcn \,,
\end{align}
with high probability. This shows that $\P\left(\qcn(G) \leq (1 + \epsilon)\tqcn\right) \to 1$, completing the proof for the upper bound of \eqref{eq:quasi_clique_number_concentration}.

\vspace{-6pt}
\paragraph{\normalfont\emph{Lower bound:}}
Let $\delta_n = 1 / \log(n)$ and define $S_n \coloneqq \{i \in V : \W_i \in [c - \delta_n, c + \delta_n]\}$ to be the subset of vertices that have vertex weight $\W_i$ close to $c$, where we recall that $c$ is such that the kernel $\kernel(\cdot, \cdot)$ attains it maximal value at the point $(c, c)$. Note that the set $S_n$ is random and by Hoeffding's inequality (see \cite[Theorem~2.8]{Boucheron2013}), for any $t > 0$, we have 
\begin{equation}
\label{eq:lower_bound_hoeffding}
\P\left(|S_n| \geq \E[|S_n|] - t\right) \leq \exp\left(-2 t^2 / n\right) \to 0 \,,
\end{equation}
where $\E[|S_n|] = n \mspace{1mu} \P(W \in [c - \delta_n, c + \delta_n]) = n^{1 - \smallO(1)}$ by definition of $\delta_n$. Furthermore, define
$p_n \coloneqq {\textstyle\inf_{\cramped{(x,y) \in [c - \delta_n, c + \delta_n]^2 \cap [0, 1]^2}}} \, \kernel(x, y)$ and observe that $p_n \to \pmax$ by continuity of the kernel, and thus $D(\gamma, p_n) \to D(\gamma, \pmax)$. Using this, together with \eqref{eq:lower_bound_hoeffding} and $t$ fixed, we obtain
\begin{align}
(1 - \epsilon)\frac{2 \log(n)}{D(\gamma, \pmax)}
  &\leq (1 - \epsilon/2)\frac{2 \log(\E[|S_n|] - t)}{D(\gamma, \pmax)}\\
  &\leq (1 - \epsilon/3)\frac{2 \log(|S_n|)}{D(\gamma, \pmax)}\\
\label{eq:lower_bound_hoeffding_tqcn}
  &\leq (1 - \epsilon/4)\frac{2 \log(|S_n|)}{D(\gamma, p_n)} \,,
\end{align}
with high probability.

Similarly to the coupling in \eqref{eq:upper_bound_coupling}, conditionally on the uniform random variables $U_{ij}$, for $i \neq j \in [n]$, and the vertex weights $\W_i$, for $i \in [n]$, define
\begin{alignat}{7}
\label{eq:lower_bound_coupling}
G'' &= (V'', E'') \,,
\quad&&\text{with}&\quad V'' &= [n]\,,
\quad&&\text{and}&\quad  E'' &= \{(i, j) : U_{ij} \leq p_n\} \,.
\end{alignat}
Note that the graph $G''$ is distributed as the Erd\H{o}s-R\'{e}nyi random graph $\rg(n, p_n)$ with edge probability $p_n$.

Given a graph $G$, recall that $G[S_n]$ denotes the subgraph induced by the vertices in $S_n$. Because the kernel is continuous around the point $(c, c)$, there exists an $n$ large enough such that $\delta_n$ is small enough to ensure that the edge probabilities satisfy $p_{ij} \geq p_n$ almost surely, for all $i \neq j \in S_n$ (note that, if the kernel is continuous everywhere then this holds for every $n$). Hence, the coupling in \eqref{eq:lower_bound_coupling} shows that $\qcn(G) \geq \qcn(G[S_n]) \geq \qcn(G''[S_n])$ almost surely, provided $n$ is large enough. Combining this with \eqref{eq:lower_bound_hoeffding_tqcn} and \cite[Theorem~1]{Balister2018} if $\gamma < 1$ or \cite[Theorem~6]{Matula1976} if $\gamma = 1$, we obtain
\begin{align}
\qcn(G)
  &\geq \qcn(G[S_n])
  \geq \qcn(G''[S_n])\\
  &\geq \frac{2}{D(\gamma, p_n)} \Bigl(\log(|S_n|) - \log\log(|S_n|) + \log(\e D(\gamma, p_n) / 2)\Bigr) - \epsilon\\
  &\geq (1 - \epsilon/4)\frac{2 \log(|S_n|)}{D(\gamma, p_n)}
  \geq (1 - \epsilon)\frac{2 \log(n)}{D(\gamma, \pmax)}
  = (1 - \epsilon)\tqcn \,,
\end{align}
with high probability. This shows that $\P\left(\qcn(G) \geq (1 - \epsilon)\tqcn\right) \to 1$, completing the proof for the lower bound of \eqref{eq:quasi_clique_number_concentration}.
\end{proof}

\paragraph{Acknowledgements.} 
The author thanks his supervisors Remco~van~der~Hofstad and Rui~M.~Castro for extensive proofreading and providing valuable feedback.

{\setlength{\emergencystretch}{3em}\printbibliography[title={References}]}
\end{document}